\numberwithin{equation}{section}
\theoremstyle{plain}
\newtheorem{thm}{Theorem}[section]
\newtheorem{prop}[thm]{Proposition}
\newtheorem{cor}[thm]{Corollary}
\newtheorem{lemma}[thm]{Lemma}
\theoremstyle{definition}
\newtheorem{deff}[thm]{Definition}
\theoremstyle{remark}
\def\g{\gamma}
\def\G{\Gamma}
\def\mG{\mathcal{G}}
\def\xra{\xrightarrow[]{}}
\def\a{\alpha}
\def\b{\beta}
\def\sub{\subseteq}
\def \Z{\mathbb Z}
\def\-{\text{-}}
\newcommand{\End}{\operatorname{End}}
\newcommand{\Hom}{\operatorname{Hom}}
\newcommand{\supp}{\operatorname{supp}}
\newcommand{\MOD}{\operatorname{MOD}}
\begin{document}

\title[Simple flat Leavitt path algebras are regular]{Simple flat Leavitt path algebras are von Neumann regular}

\author{A.A. Ambily}

\address{Cochin University of Science and Technology, India\\ Current address: Western Sydney University\\
Australia}

\email{ambily@cusat.ac.in, a.ambattuasokan@westernsydney.edu.au}

\author{Roozbeh Hazrat}
\address{
Western Sydney University\\
Australia} \email{r.hazrat@westernsydney.edu.au}

\author{Huanhuan Li}

\address{
Western Sydney University\\
Australia} \email{h.li@westernsydney.edu.au}

\subjclass[2010]{16D40, 16E50}

\keywords{Simple flat ring, von Neumann regular ring, Steinberg algebra, Leavitt path algebra, aperoidic}

\date{\today}

\begin{abstract}  
For a unital ring,  it is an open question whether flatness of simple modules implies all modules are flat and thus the ring is von Neumann regular. 
The question was raised by Ramamurthi over 40 years ago~\cite{ram} who called such rings SF-rings (i.e., simple modules are flat). 
In this note we show that a SF Steinberg algebra of an ample Hausdorff groupoid, graded by an ordered group, has an aperiodic unit space. For graph groupoids this implies that the graphs are acyclic. Combining with the Abrams-Rangaswamy Theorem~\cite{abramsranga}, it follows that SF Leavitt path algebras are regular, answering Ramamurthi's question in positive for the class of Leavitt path algebras. 
\end{abstract}

\maketitle

\begin{flushright}
\emph{On the occasion of his 80th birthday\\ to Kulumani M. Rangaswamy\\ whose passion for Mathematics is contagious } 
\end{flushright}

\section{Introduction}

There is a substantial amount of literature on characterising a ring in terms of certain properties of its simple modules such as \cite{lam,Vil}. In 1975, Ramamurthi~\cite{ram} considerd left (right) SF-rings whose left (right) simple modules are flat. It is known that a ring with unit is von Neumann regular if and only if all modules are flat~\cite[Corollary 1.13]{goodearlbook}. In ~\cite{ram}, Ramamurthi raised a question whether flatness of all left (right) simple modules implies that the ring is von Neumann regular. Since then, SF-rings have been extensively studied by many authors and the regularity of SF-rings which satisfy some certain additional conditions is established (see for example~\cite{rege, xiao}). 
The question whether SF-rings are von Neumann regular is still open.

In the last decade, Leavitt path algebras of graphs were introduced as an algebraisation of graph $C^*$-algebras \cite{lpabook}. Quite recently Steinberg algebras were introduced as an algebraisation of the groupoid $C^*$-algebras \cite{cfst, st}. Steinberg algebras include Leavitt and inverse semigroup algebras (see \cite{cs,st}). These classes of algebras have been attracting significant attention.


Since Leavitt and Steinberg algebras are rings with local units, in this note, we first generalise the concept of left SF-rings to the setting of rings with local units. We will show that if a Steinberg algebra of a graded ample Hausdorff groupoid, graded by an ordered group, with coefficients in a field is a left SF-ring, then the unit space of the groupoid is aperiodic (see Proposition \ref{hyhygt}). Recall that the Steinberg algebra of a graph groupoid is isomorphic to the Leavitt path algebra of the graph \cite{cs}. The aperiodicity of unit space in this setting implies that the graph is acyclic. Combining this with the Abrams-Rangaswamy Theorem~\cite{abramsranga}, it follows that SF Leavitt path algebras are von Neumann regular rings, thus answering Ramamurthi's question in positive for the class of Leavitt path algebras.

The structure of this note is as follows. In Section \ref{section2} we generalise the concept of left SF-ring to the setting of rings with local units and extend equivalent statements for von Neumann regular unital rings to rings with local units. In Section \ref{section3}, we first recall concepts of ample groupoids and Steinberg algebras of ample Hausdorff groupoids. Then in subsection \ref{subsection33}, we give a certain necessary condition for a Steinberg algebra of an ample Hausdorff groupoid to be von Neumann regular. In subsection \ref{subsection34} we prove that for a $\G$-graded ample Hausdorff groupoid $\mG$ (i.e.,  the groupoid $\mG$ equipped with a continuous $1$-cocycle $c:\mG\xra \G$), if its Steinberg algebra is a left SF-ring, then the unit space $\mG^{(0)}$ is aperiodic. The conclusion in the case of Leavitt path algebras then follows.

\section{Simple flat and von Neumann regular rings with local units}\label{section2}

 In this section, we generalise the concept of unital left (right) SF-rings and von Neumann regular rings to the setting of rings with local units and 
 extend their characterisations into this setting.

The majority of calculus of Hom and tensor product in the setting of unital rings can be extended analogously to the setting of rings with local units. However the concept of left free modules should be replaced by left quasi-free modules (i.e., a direct sum of principal left ideals generated by idempotents). Therefore extending statements related to flatness from unital to local unital rings, although similar, requires a bit of care. For this reason in this section we shall establish some of the basic facts that we later need in the setting of rings with local units.

\subsection{Simple flat rings}
Let $A$ be an associative ring (not necessarily with unit). $A$ is called \emph{a ring with local units} if for any finitely many elements $a_1, \cdots, a_n\in A$, there exists an idempotent $e\in A$ such that $\{a_1, \cdots, a_n\}\subseteq eAe$.  The book of Wisebaur~\cite[Chapter~10]{wisbauer} is an excellent source for the treatment of rings with local units.

Let $A$ be a ring with local units. A left $A$-module $M$ is called \emph{unital} if $AM=M$. We denote by $A\-\MOD$ the category of all the left unital $A$-modules and by $\MOD\- A$ the category of all the right unital $A$-modules. 


The calculus of tensor products for rings with local units is similar to the case of unital rings. We demonstrate this with one example which we use in the paper. 

\begin{prop}\label{hytrewe}
Let $A$ be a ring with local units, $M$ a unital right $A$-module and $I$ a left ideal of $A$. Then the natural map 
\begin{align*}
\phi:M\otimes_A A/I &\longrightarrow M/MI,\\ 
m\otimes I+r&\longmapsto MI+mr
\end{align*}
is a group isomorphism. 
\end{prop}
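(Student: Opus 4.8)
The plan is to mirror the classical unital proof, in which one tensors the short exact sequence $0\to I\to A\to A/I\to 0$ of left $A$-modules with $M$, but to keep the role of the local units fully explicit by constructing a two-sided inverse $\psi$ to $\phi$ by hand. First I would confirm that $\phi$ is a well-defined homomorphism of abelian groups. The assignment $(m,r+I)\mapsto mr+MI$ is biadditive, is independent of the representative $r$ since $m(r-r')\in MI$ whenever $r-r'\in I$, and is $A$-balanced because $(ma)r=m(ar)$ while $a(r+I)=ar+I$ (it is precisely $I$ being a \emph{left} ideal that makes $A/I$ a left $A$-module and this computation legitimate). The universal property of $\otimes_A$ then produces $\phi$.

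The substance lies in defining $\psi\colon M/MI\to M\otimes_A A/I$. The single device I need is a standard feature of a unital module over a ring with local units: every finite subset $\{x_1,\dots,x_n\}\subseteq M$ has a common idempotent right local unit, i.e. an idempotent $g\in A$ with $x_ig=x_i$ for all $i$. I would prove this by first writing each $x_i=x_ie_i$ (possible since $MA=M$, and finitely many elements of $A$ lie in $eAe$ for some idempotent $e$), then choosing an idempotent $g$ with $e_1,\dots,e_n\in gAg$, so that $ge_i=e_ig=e_i$ and hence $x_ig=x_ie_ig=x_ie_i=x_i$. With this in hand I set $\psi(m+MI)=m\otimes(g+I)$ for any idempotent $g$ satisfying $mg=m$.

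The main obstacle --- and essentially the only place the absence of a global unit costs anything --- is the well-definedness of $\psi$, which I would split into two checks. For independence of the choice of $g$: if $mg=m=mh$, choose an idempotent $k$ with $g,h\in kAk$, so that $gk=g$ and $hk=h$; then $m\otimes(k+I)=mg\otimes(k+I)=m\otimes(gk+I)=m\otimes(g+I)$ and symmetrically $m\otimes(k+I)=m\otimes(h+I)$, giving $m\otimes(g+I)=m\otimes(h+I)$. For compatibility with the subgroup $MI$: if $m-m'\in MI$, write $m-m'=\sum_j n_jb_j$ with $n_j\in M$ and $b_j\in I$, and use the lemma to pick one idempotent $g$ that is a common right local unit for $m,m',n_1,\dots,n_k$ and simultaneously absorbs the $b_j$ in the sense $b_jg=b_j$. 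Then, invoking independence to use this single $g$ for both terms, $\psi(m+MI)-\psi(m'+MI)=(m-m')\otimes(g+I)=\sum_j n_j\otimes(b_jg+I)=\sum_j n_j\otimes(b_j+I)=0$, since $b_j\in I$ makes $b_j+I$ the zero of $A/I$.

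Finally I would verify the inverse relations. Directly, $\phi\psi(m+MI)=\phi(m\otimes(g+I))=mg+MI=m+MI$. For the other composite, given a generator $m\otimes(r+I)$, choose $g$ with $mg=m$ and $r\in gAg$ (so $rg=r$, and $g$ is automatically a local unit for $mr$); then $\psi\phi(m\otimes(r+I))=\psi(mr+MI)=mr\otimes(g+I)=m\otimes(rg+I)=m\otimes(r+I)$. Since $\psi\phi$ and the identity are both additive and agree on the generators $m\otimes(r+I)$, they coincide, which completes the argument. Alternatively the statement could be deduced from right-exactness of $M\otimes_A-$ applied to $0\to I\to A\to A/I\to 0$ together with the isomorphism $M\otimes_A A\cong M$, but that isomorphism demands exactly the same local-unit bookkeeping, so the direct construction seems the more economical route.
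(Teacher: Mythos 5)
Your argument is correct. It differs from the paper's proof only in its packaging: the paper also obtains $\phi$ from the $A$-balanced map $(m,I+r)\mapsto MI+mr$, gets surjectivity immediately from $MA=M$, and then proves injectivity head-on --- given $\sum_i m_i\otimes(r_i+I)$ in the kernel, it writes $\sum_i m_ir_i=\sum_j m'_js_j$ with $s_j\in I$, picks one idempotent $e$ absorbing all the $r_i$ and $s_j$, and computes $\sum_i m_i\otimes(r_i+I)=\sum_i m_ir_i\otimes(e+I)=\sum_j m'_j\otimes(s_j+I)=0$. You instead build an explicit two-sided inverse $\psi(m+MI)=m\otimes(g+I)$, which forces you to verify independence of the choice of $g$ and compatibility with $MI$; note that your second well-definedness check is literally the paper's injectivity computation, and your first is extra work the paper never has to do. So the essential mechanism --- a common idempotent local unit absorbing finitely many elements so that everything can be pulled across the tensor sign --- is identical; your version is a little longer but makes the inverse visible, while the paper's is more economical. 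One tiny point worth keeping explicit if you write this up: the existence of an idempotent $e$ with $xe=x$ for $x\in M$ uses $MA=M$ together with local units in $A$ exactly as you describe (this is the right-module analogue of item (ii) quoted from Wisbauer \S 49.1), so nothing is missing.
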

\begin{proof}
Note that the map $M \times A/I \rightarrow M/MI; (m,I+r)\mapsto MI+mr$ is an $A$-balanced map inducing the group homomorphism $\phi$. Since $MA=M$, $\phi$ is an epimorphism. If $\phi(\sum m_i\otimes I+r_i)=0$. Then $\sum_i m_ir_i \in MI$. Thus $\sum_i m_ir_i=\sum_j m'_js_j$, where $m_j'\in M$ and $s_j\in I$.  Since $A$ has local units, there is an idempotent $e\in A$ such that $er_i=r_ie=r_i$, $es_j=s_je=s_j$ for all $i,j$'s. Therefore 
\begin{multline*}
\sum m_i\otimes I+r_i=\sum m_i\otimes r_i(I+e)=\sum m_ir_i\otimes I+e=\sum m'_j s_j\otimes I+e=\sum m'_j \otimes I+s_je=\sum m'_j \otimes I+s_j=0.
\end{multline*} 
\end{proof}

A left $A$-module $N$ is called \emph{flat} in $A\-\MOD$ if the functor $-\otimes_A N: \MOD \-A \xra AB$ is exact, where $AB$ is the category of abelian groups. Recall from \cite[\S49.1]{wisbauer} that for $A$ a ring with local units,
\begin{itemize}
\item[(i)] $A$ is flat and a generator in $A\-\MOD$;
\item[(ii)] if $N\in A\-\MOD$, then for finitely many $n_1, \cdots, n_k \in N$ there exists an idempotent $e\in A$ with $en_i = n_i$ for $i = 1, \cdots, k$.
\end{itemize}

The following lemma gives characterisations of flat modules in $A\-\MOD$, which is proved in \cite[\S49.5]{wisbauer} and \cite[\S36.2]{wisbauer}.

\begin{lemma} \label{charflat} Let $A$ be a ring with local units. The following conditions are equivalent:

\begin{itemize}
\item[(1)] A left $A$-module $N$ is flat in $A\-\MOD$;
 
\item[(2)] The functor  $-\otimes_A N$ is exact on exact sequences of the form
$0\xra J_A\xra A_A$ (with $J_A$ finitely generated right ideal of $A$);

 \item[(3)] $J \otimes_A N \xra JN, i\otimes n\mapsto in$, is injective (an isomorphism) for every (finitely generated) right ideal $J\subseteq A$;

\item[(4)] $N$ is a direct limit of projective modules in $A\-\MOD$;

\item[(5)] $N$ is a direct limit of finitely generated projective
modules in $A\-\MOD$. 
\end{itemize} 
\end{lemma}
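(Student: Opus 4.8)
The plan is to run the classical chain of equivalences for flatness over a unital ring, inserting local units wherever the identity element would ordinarily be invoked, and folding conditions $(4)$ and $(5)$ in through a local-unit form of Lazard's theorem. The organising skeleton is $(1)\Rightarrow(2)\Leftrightarrow(3)\Rightarrow(1)$ together with $(1)\Rightarrow(5)\Rightarrow(4)\Rightarrow(1)$. The first thing I would record is the identification $A\otimes_A N\cong N$ via $a\otimes n\mapsto an$ for a unital $N$: surjectivity is exactly $AN=N$, and injectivity is the left-module mirror of Proposition \ref{hytrewe}, resting on fact (ii)—given a representative of a relation, choose one idempotent $e$ fixing all the finitely many elements involved and absorb it to collapse the tensor. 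This isomorphism is what lets the condition $(3)$, phrased in terms of the concrete subgroup $JN\subseteq N$, talk to the abstract tensor functor.

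With that identification available, $(2)\Leftrightarrow(3)$ is bookkeeping. Tensoring the short exact sequence $0\to J\to A\to A/J\to 0$ with $N$ and using that $-\otimes_A N$ is always right exact yields the exact complex $J\otimes_A N\to A\otimes_A N\to (A/J)\otimes_A N\to 0$. Under $A\otimes_A N\cong N$ the first map becomes $i\otimes n\mapsto in$, whose image is precisely $JN$; hence injectivity of $J\otimes_A N\to A\otimes_A N$ (which is condition $(2)$ read on this sequence) is the same as injectivity of $J\otimes_A N\to JN$, the map in condition $(3)$, the surjectivity onto $JN$ being automatic.

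The implication $(1)\Rightarrow(2)$ is trivial, and for $(2)\Rightarrow(1)$ I would reduce the preservation of an arbitrary monomorphism to the inclusions of finitely generated right ideals. Since $-\otimes_A N$ is right exact, flatness is exactly preservation of monos, and the standard argument shows it is enough to test this on $0\to J\to A$. Two places require the local-unit hypotheses: first, every unital module is a quotient of a quasi-free module, which (with fact (i), that $A$ is flat and a generator in $A\text{-}\MOD$) keeps the usual reduction from general injections to ideal inclusions valid; second, an arbitrary right ideal is the directed union of its finitely generated subideals, and since tensoring commutes with directed colimits one may shrink $J$ to be finitely generated without changing the injectivity being tested.

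Finally $(5)\Rightarrow(4)$ is immediate, and $(4)\Rightarrow(1)$ follows because projective modules are direct summands of quasi-free modules and hence flat, while $-\otimes_A N$ commutes with directed colimits, so a directed colimit of flat modules is flat. The genuinely hard step is $(1)\Rightarrow(5)$, the local-unit version of Lazard's theorem, and this is where I expect the main obstacle to lie. The idea is to form the category of approximations $(P,f)$ with $P$ finitely generated projective and $f\colon P\to N$, and to show flatness makes this category directed with colimit $N$. The essential modification from the unital case is that the approximating modules are summands of quasi-free modules built from idempotents furnished by fact (ii), rather than of free modules. Concretely, the equational criterion packaged in condition $(3)$ lets one factor each relation $\sum_i a_i n_i=0$ through a finitely generated projective, and the real work is in verifying that these factorisations assemble into a genuinely directed system whose colimit is $N$—not merely that each finite tuple of elements lifts. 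Checking directedness and that the colimit recovers $N$ exactly, without a global unit to anchor the constructions, is where the care demanded by the local-unit setting is concentrated.
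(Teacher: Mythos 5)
The paper does not prove this lemma at all: it is stated with a pointer to Wisbauer \cite[\S 49.5, \S 36.2]{wisbauer}, so there is no in-paper argument to compare against. Your outline follows essentially the route that the cited source takes --- the Baer-type reduction $(1)\Rightarrow(2)\Leftrightarrow(3)\Rightarrow(1)$ plus a Lazard-style theorem for $(1)\Rightarrow(5)\Rightarrow(4)\Rightarrow(1)$, with quasi-free modules standing in for free ones and local units supplying the idempotents that the identity element would normally provide. The parts you actually carry out are sound: the identification $A\otimes_A N\cong N$ via fact (ii), the translation between $(2)$ and $(3)$ through right exactness of the tensor functor, the directed-union reduction to finitely generated ideals, and $(4)\Rightarrow(1)$ via commutation of $-\otimes_A N$ with directed colimits are all correct and are exactly the places where the local-unit hypothesis must be inserted.

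The genuine gap is the one you name yourself: $(1)\Rightarrow(5)$ is only described, not proved. You say the category of approximations $(P,f)$ must be shown to be filtered with colimit $N$, and that ``the real work'' lies there --- but that work is the entire mathematical content of the implication, and it is precisely where the absence of a global unit bites. In the unital Lazard argument, filteredness rests on the claim that any map from a finitely presented module into a flat module factors through a finitely generated free module; in the local-unit setting one must replace ``finitely presented'' by ``cokernel of a map of finitely generated quasi-free modules,'' verify that every unital module is a filtered colimit of such objects (this uses fact (ii) to write $N$ as a quotient of $\bigoplus Ae_i$), and then run the factorisation using the equational criterion from $(3)$. None of this is carried out, so as it stands the proposal establishes the equivalence of $(1)$--$(4)$ but not the inclusion of $(5)$, and one cannot certify the lemma from it without either doing that construction or, as the paper does, citing \cite[\S 49.5]{wisbauer} where it is done.
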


We are in a position to define the simple flat rings in the setting of rings with local units.

\begin{deff}\label{sfring} Let $A$ be a ring with local units. We call $A$ a \emph{left simple flat ring} (left SF-ring for short) if all simple left $A$-modules are flat in  $A\-\MOD$. The right SF-rings are defined analogously.  
\end{deff}

We need an element-wise characterisation of SF-rings with local units (Theorem~\ref{sfchar}). For this we have to extend some results from the unital setting to rings with local units. 

\begin{lemma} \label{iff}Let $A$ be a ring with local units. Suppose that $V$ is a left flat $A$-module and $0\xra K\xra V\xra V'\xra 0$ is a short exact seqence in $A\-\MOD$. Then $V'$ is flat if and only if for each (finitely generated) right ideal $I$ of $A$, $IK=K\cap IV$.
\end{lemma}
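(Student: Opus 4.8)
The plan is to deduce the statement from the internal characterisation of flatness in Lemma~\ref{charflat}(3): a left $A$-module $N$ is flat precisely when, for every (finitely generated) right ideal $I$ of $A$, the natural multiplication map $\theta_N\colon I\otimes_A N\to N$, $i\otimes n\mapsto in$, is injective (equivalently, an isomorphism onto $IN$). Thus I would fix a (finitely generated) right ideal $I$ and aim to compute $\ker\theta_{V'}$ explicitly in terms of $K$, $V$ and $I$, the point being that $V'$ is flat if and only if this kernel vanishes for all such $I$. Note from the outset that $IK\subseteq K\cap IV$ always holds, so only the reverse containment $K\cap IV\subseteq IK$ carries content, and it is exactly the vanishing $(K\cap IV)/IK=0$ that I expect to match $\ker\theta_{V'}=0$.

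Denote the given sequence by $0\to K\xrightarrow{\iota} V\xrightarrow{\pi} V'\to 0$. Tensoring on the left with the right module $I$ and using right-exactness of $I\otimes_A-$ together with the naturality of $\theta$, I obtain the commutative diagram
\begin{CD}
I\otimes_A K @>{1\otimes\iota}>> I\otimes_A V @>{1\otimes\pi}>> I\otimes_A V'\\
@V{\theta_K}VV @V{\theta_V}VV @V{\theta_{V'}}VV\\
K @>{\iota}>> V @>{\pi}>> V'
\end{CD}
in which the top row is exact at $I\otimes_A V$ with $1\otimes\pi$ surjective, and the bottom row is the original short exact sequence. The hypothesis that $V$ is flat enters through Lemma~\ref{charflat}(3): it guarantees that $\theta_V$ is injective with image exactly $IV$. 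The squares commute because $\iota$ and $\pi$ are $A$-linear, so all the local-unit calculus (right-exactness of the tensor product, the identification $A\otimes_A N\cong N$ for unital $N$, and the description $\operatorname{Im}\theta_N=IN$) is the same bookkeeping as in the unital case.

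The heart of the argument is a snake-lemma style chase producing an isomorphism $\ker\theta_{V'}\cong(K\cap IV)/IK$. Given $z\in\ker\theta_{V'}$, I lift it along the surjection $1\otimes\pi$ to some $y\in I\otimes_A V$; commutativity gives $\pi(\theta_V(y))=\theta_{V'}(z)=0$, so $\theta_V(y)\in\ker\pi=\iota(K)$, say $\theta_V(y)=\iota(k)$, and since $\theta_V(y)\in IV$ we get $k\in K\cap IV$ (identifying $K$ with $\iota(K)$). Changing the lift $y$ by an element of $\operatorname{Im}(1\otimes\iota)$ alters $k$ by an element of $\operatorname{Im}\theta_K=IK$, using the left-hand square $\theta_V\circ(1\otimes\iota)=\iota\circ\theta_K$; this shows $z\mapsto k+IK$ is well defined. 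Injectivity and surjectivity of this assignment follow from the same square together with the injectivity of $\theta_V$ and $\iota$. With the isomorphism $\ker\theta_{V'}\cong(K\cap IV)/IK$ in hand, I conclude: $V'$ is flat if and only if $\theta_{V'}$ is injective for every (finitely generated) right ideal $I$, i.e.\ if and only if $(K\cap IV)/IK=0$ for every such $I$, i.e.\ $IK=K\cap IV$ for all (finitely generated) right ideals $I$.

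The main obstacle is establishing the isomorphism $\ker\theta_{V'}\cong(K\cap IV)/IK$ cleanly, namely verifying that the connecting assignment is well defined modulo $IK$ and is bijective; everything else is routine once Lemma~\ref{charflat}(3) supplies injectivity and the image of $\theta_V$. I would also take care that the ``finitely generated'' qualifier is applied consistently throughout, so that the criterion from Lemma~\ref{charflat} and the final condition on ideals refer to the same class of right ideals.
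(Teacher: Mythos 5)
Your proposal is correct and follows essentially the same route as the paper: the paper reduces the lemma to Lemma~\ref{charflat}(3) and then defers to the standard unital argument of Anderson--Fuller (19.18), which is precisely the diagram chase identifying $\ker\theta_{V'}$ with $(K\cap IV)/IK$ that you carry out. Your write-up simply supplies the details the paper leaves to the reference, including the correct use of injectivity of $\theta_V$ (flatness of $V$) for the injectivity of the connecting map.
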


\begin{proof} By Lemma \ref{charflat} (1) and (3), it suffices to prove the following statement: $I \otimes_A V' \xra IV', i\otimes v\mapsto iv$ is an isomorphism for every right ideal $I\subseteq A$ if and only if $IK=K\cap IV$ for every right ideal $I\subseteq A$. The proof for the statement is similar to the case of unital rings; compare \cite[19.18]{andersonfuller}. 
\end{proof}

\begin{lemma}\label{lemsfrin} Let $A$ be a ring with local units and $I$ a left ideal of $A$. The following conditions are equivalent:

\begin{itemize}
\item[(1)] The left $A$-module $A/I$ is flat;

\item[(2)] For every $x\in I$, $x\in xI$.
\end{itemize}  
\end{lemma}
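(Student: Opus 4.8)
The plan is to present $A/I$ as a cokernel and reduce everything to Lemma~\ref{iff}. Concretely, I would consider the short exact sequence of left $A$-modules
\begin{equation*}
0 \longrightarrow I \longrightarrow A \longrightarrow A/I \longrightarrow 0.
\end{equation*}
Since $A$ has local units, $A$ is flat and unital ($AA=A$), and the left ideal $I$ is also unital ($AI=I$, as $x=ex$ for a suitable idempotent $e$); hence Lemma~\ref{iff} applies with $V=A$ and $K=I$. It yields that $A/I$ is flat if and only if $JI = I\cap JA$ for every (finitely generated) right ideal $J\subseteq A$. A short preliminary computation simplifies this: $JA=J$ for any right ideal $J$, since $JA\subseteq J$ trivially and every $x\in J$ satisfies $x=xe\in JA$ for an idempotent $e$ with $xe=x$. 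Thus the task becomes to show that
\begin{equation*}
JI = I\cap J \quad \text{for every (finitely generated) right ideal } J\subseteq A \tag{$\ast$}
\end{equation*}
is equivalent to condition (2).

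For the implication $(\ast)\Rightarrow(2)$, the key idea is to test $(\ast)$ on the principal right ideal $J=xA$ associated to a given $x\in I$. This $J$ is finitely generated, and $x=xe\in xA$ together with $x\in I$ gives $x\in I\cap xA=(xA)I$. Because $I$ is a left ideal, each generator $(xa)i$ of $(xA)I$ equals $x(ai)$ with $ai\in I$, so $(xA)I\subseteq xI$; combining, $x\in xI$, which is exactly (2).

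For the converse $(2)\Rightarrow(\ast)$ I would check the two inclusions directly for an arbitrary right ideal $J$. The inclusion $JI\subseteq I\cap J$ is immediate from $I$ being a left ideal and $J$ a right ideal. For $I\cap J\subseteq JI$, take $x\in I\cap J$; by (2) there is $y\in I$ with $x=xy$, and since $x\in J$ and $y\in I$ this writes $x=xy\in JI$. This direction even holds for all right ideals, which is compatible with the finitely generated formulation offered by Lemma~\ref{iff}.

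There is no computationally hard step here; the proof is essentially a clean application of Lemma~\ref{iff}. The main points needing care are the local-unit bookkeeping — verifying that $A$, $I$, and $A/I$ are unital modules so that Lemma~\ref{iff} is applicable and that $JA=J$ — and, in the direction $(\ast)\Rightarrow(2)$, recognizing that the principal right ideal $xA$ is finitely generated, so that the finitely generated version of $(\ast)$ already forces $x\in xI$.
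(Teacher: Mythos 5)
Your proposal is correct and follows essentially the same route as the paper: both reduce to Lemma~\ref{iff} applied to $0\to I\to A\to A/I\to 0$, test the resulting condition on the principal right ideal $xA$ for the forward direction, and verify the two inclusions $JI\subseteq I\cap JA$ and $I\cap JA\subseteq JI$ directly for the converse. The only cosmetic difference is that you explicitly simplify $JA=J$ before comparing, which the paper leaves implicit.
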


\begin{proof} Consider the short exact sequence $0\xra I\xra A\xra A/I\xra 0$ in $A\-\MOD$. Suppose that (1) holds. 
For every $x\in I$, $xA$ is a right ideal of $A$. By Lemma \ref{iff}, we have $xA\cdot I=I\cap (xA\cdot A)$. Since $A$ is a ring with local units, for $x\in I\subseteq A$, there exists an idempotent $e\in A$ such that $x=xe=xee$, implying $x\in I\cap (xA\cdot A)$. Thus $x\in xA\cdot I\subseteq xI$, implying that (2) holds. Conversely, suppose that (2) holds. Take any right ideal $J$ of $A$. By Lemma \ref{iff}, we need to show that $JI=I\cap JA$. Obviously, $JI\sub I\cap JA$. On the other hand, for any  $x\in I\cap JA$, we have $x\in I\cap J$. Since $x\in xI$, there exists $y\in I$ such that $x=xy\in JI$. Thus $I\cap JA\sub JI$. 
\end{proof}

As a direct consequence of Lemma~\ref{lemsfrin} we have the following theorem which gives a characterisation of left 
SF-rings. This  will be used in Proposition~\ref{hyhygt} to give a necessary condition for a Steinberg algebra to be a SF-ring. 

\begin{thm} \label{sfchar}
Let $A$ be a ring with local units. Then $A$ is a left SF-ring if and only if for any left maximal ideal $I$ of $A$ and $a\in I$, there exists an element $b\in I$ such that $ab=a$. 
\end{thm}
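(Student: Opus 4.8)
The plan is to deduce the theorem directly from Lemma~\ref{lemsfrin} by translating the flatness of all simple left $A$-modules into the stated element-wise condition on maximal left ideals. By Definition~\ref{sfring}, $A$ is a left SF-ring precisely when every simple left $A$-module is flat, so the first step is to identify the simple objects of $A\-\MOD$ with the quotients $A/I$ for $I$ a maximal left ideal of $A$.

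First I would establish this identification in the local-unit setting. A nonzero simple unital left $A$-module $S$ is generated by any one of its nonzero elements $s$: by property~(ii) recalled before Lemma~\ref{charflat} there is an idempotent $e\in A$ with $es=s$, so $As\ne 0$ and hence $As=S$ by simplicity, giving $S\cong A/\Ann(s)$ with $\Ann(s)$ a maximal left ideal. Conversely, for any maximal left ideal $I$ the quotient $A/I$ is simple as an $A$-module, and it is automatically unital: since $A$ has local units one has $A^2=A$, whence $A\cdot(A/I)=(A^2+I)/I=(A+I)/I=A/I$. Thus $S\mapsto\Ann(s)$ and $I\mapsto A/I$ furnish the desired correspondence, and $A$ is a left SF-ring if and only if $A/I$ is flat for every maximal left ideal $I$.

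With this in hand, I would apply Lemma~\ref{lemsfrin} to each maximal left ideal $I$: the module $A/I$ is flat if and only if $x\in xI$ for every $x\in I$. The final step is purely a matter of unwinding notation, since $x\in xI$ says exactly that there exists $b\in I$ with $xb=x$, which is the asserted condition with $a=x$. Quantifying over all maximal left ideals $I$ then yields the equivalence claimed in the theorem.

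The only point requiring genuine care, as opposed to routine bookkeeping, is the identification of simple modules with quotients by maximal left ideals. In a ring without unit a maximal left ideal can in principle produce a quotient carrying the zero $A$-action, which would leave it outside $A\-\MOD$; but the equality $A^2=A$ forced by the local units rules this out, so no degenerate maximal left ideals intervene and the correspondence is clean. Everything else is a direct substitution into Lemma~\ref{lemsfrin}.
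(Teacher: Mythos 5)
Your proposal is correct and follows exactly the route the paper intends: the theorem is stated there as a direct consequence of Lemma~\ref{lemsfrin}, with the identification of simple unital modules and quotients $A/I$ by maximal left ideals left implicit. You supply precisely the details the paper omits --- in particular the check that $A^2=A$ rules out degenerate maximal left ideals and that every simple module in $A\-\MOD$ arises as such a quotient --- and the rest is the substitution $x\in xI \Leftrightarrow \exists\, b\in I,\ xb=x$.
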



\subsection{Von Neumann regular rings}
Von Neumann regular rings constitute an important class of rings. A ring $A$ (possibily without unit) is \emph{von Neumann regular} (or regular for short), if for any $x \in A$, we have $x \in xAx$. There are several equivalent module theoretical definitions for von Neumann regular rings when $A$ is a unital ring, such as $A$ is regular if and only if any module over $A$ is flat. Goodearl's book~\cite{goodearlbook} is devoted to this class of rings. In order to extend statements of unital regular rings to the setting of rings with local units, we need to recall quasi-free modules over a ring with local units,  which play the same role as free modules over a ring with unit.

We call an $A$-module \emph{quasi-free} if it is isomorphic to a direct sum of modules of the form $Ae$ with $e^2 = e\in A$. With this definition there are analogous results as for free modules over rings with unit, such as:

\begin{itemize} 
\item[(1)] An $A$-module is in $A\-\MOD$ if and only if it is an image of a quasi-free $A$-module.

\item[(2)] A module in $A\-\MOD$ is finitely generated if and only if it is an image of a finitely generated, quasi-free $A$-module.

\item[(3)] A module in $A\-\MOD$ is (finitely generated) projective in $A\-\MOD$ if and only if it is a direct summand of a (finitely generated) quasi-free $A$-module.
\end{itemize}

These facts will be used in Proposition~\ref{pkhti1}. Next, we will show that for a finitely generated quasi-free $A$-module $F$, the unital ring $\End_A(F)$ is regular. We need the following lemma.

\begin{lemma} \cite[Lemma 1.6]{goodearlbook} \label{lemregular} Let $A$ be a ring with unit. Let $e_1, \cdots, e_n$ be orthogonal idempotents in $A$ such that $e_1+\cdots+e_n=1$. Then $A$ is regular if and only if for any $x\in e_iAe_j$, there exists $y\in e_jAe_i$ such that $xyx=x$.
\end{lemma}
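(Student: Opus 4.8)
The plan is to handle the two implications separately, the forward one being a one-line conjugation and all the content being in the converse. For the forward implication, suppose $A$ is regular and take $x\in e_iAe_j$, so that $e_ix=x=xe_j$. Choosing $z\in A$ with $xzx=x$, I would simply put $y=e_jze_i\in e_jAe_i$, and then
\[
xyx=x e_j z e_i x=(xe_j)z(e_ix)=xzx=x ,
\]
which is valid for every pair $i,j$ (including $i=j$).

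For the converse I would read the Peirce decomposition $A=\bigoplus_{i,j}e_iAe_j$ as presenting $A$ as a generalised matrix ring whose $(i,j)$-block is $e_iAe_j$; the hypothesis then says exactly that every element supported in a single block is quasi-invertible, with a quasi-inverse supported in the transposed block. The task is to promote this entrywise regularity to regularity of an arbitrary $a=\sum_{i,j}e_iae_j$. I would induct on $n$. The case $n=1$ is the definition of regularity. For the inductive step, put $g=e_2+\cdots+e_n$; the corner ring $gAg$ is unital with identity $g$, carries the orthogonal idempotents $e_2,\dots,e_n$ summing to $g$, and satisfies $e_i(gAg)e_j=e_iAe_j$ for $i,j\ge 2$, so it inherits the entrywise hypothesis and is regular by induction.

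It then remains to combine the corner $e_1Ae_1$, the now-regular corner ring $gAg$, and the entrywise conditions on the blocks $e_1Ae_j$ and $e_iAe_1$ into a quasi-inverse for all of $a$. I would do this by a Gaussian-elimination / Schur-complement procedure adapted to quasi-inverses: use a quasi-inverse $a_{11}'$ of the $(1,1)$-entry $a_{11}=e_1ae_1$ to produce idempotents $p=a_{11}a_{11}'$ and $q=a_{11}'a_{11}$ in $e_1Ae_1$, clear the portions of the first row and column visible to $p$ and $q$ by one-sided multiplications respecting the grading $e_iAe_j\cdot e_jAe_k\subseteq e_iAe_k$, pass to the resulting Schur complement inside the regular ring $gAg$, and finally reassemble the partial quasi-inverses into an element $b$ with $aba=a$.

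The main obstacle is precisely this assembling step, for two linked reasons. First, because $a_{11}$ is only quasi-invertible rather than invertible, the pivot clears only the $p$- and $q$-components of the first row and column, leaving complementary $(e_1-p)$- and $(e_1-q)$-pieces that must be carried along; keeping the idempotents generated at successive stages orthogonal, and arguing that the procedure terminates, is the genuinely delicate bookkeeping. Second, the hypothesis is stated entrywise, whereas the elimination naturally wants a quasi-inverse for the whole rectangular block $e_1Ag=\bigoplus_{j\ge 2}e_1Ae_j$ inside $gAe_1$; fusing the several entrywise quasi-inverses into one block quasi-inverse, using the regularity of $gAg$, is the crux of the argument and is where the Peirce grading must be exploited most carefully.
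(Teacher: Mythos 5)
Your forward implication is complete and correct. The converse, however, is a plan rather than a proof: you yourself flag the ``assembling step'' and the ``fusing'' of the entrywise quasi-inverses into a block quasi-inverse as the crux, and neither is carried out. Concretely, after your induction reduces the corner $gAg$ (with $g=e_2+\cdots+e_n$) to regularity, what remains is exactly the $n=2$ case of the lemma for the pair of idempotents $e_1,g$; but for that pair the off-diagonal hypothesis you would need --- every $x\in e_1Ag$ admits a quasi-inverse in $gAe_1$ --- is not among your hypotheses, which concern only the finer blocks $e_1Ae_j$. So the argument reduces the problem to two statements (the block quasi-inverse, and the Schur-complement reassembly) that are each left unproved, and the second in particular hides essentially all the difficulty. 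Note also that the paper gives no proof of this lemma: it is quoted verbatim from Goodearl's book, so there is no in-paper argument to compare against.

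The missing ingredient that makes the converse routine (and is the one used in Goodearl's proof, his Lemma~1.3) is: \emph{if $x-xyx$ is von Neumann regular for some $y$, then $x$ is von Neumann regular}; explicitly, if $(x-xyx)z(x-xyx)=x-xyx$, then $w=y+(1-yx)z(1-xy)$ satisfies $xwx=x$. With this, one first shows by induction on $k$ that every element $x$ of $e_iA(e_1+\cdots+e_k)$ is regular: choose $y\in e_kAe_i$ with $(xe_k)y(xe_k)=xe_k$; since $y=e_ky$, one computes $(x-xyx)e_k=xe_k-(xe_k)y(xe_k)=0$, so $x-xyx\in e_iA(e_1+\cdots+e_{k-1})$ is regular by induction, whence $x$ is regular. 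Taking $k=n$ shows every element of each $e_iA$ is regular, and a symmetric induction on $(e_1+\cdots+e_k)A$ (using $y$ with $y=ye_k$ this time) then shows every element of $A=(e_1+\cdots+e_n)A$ is regular. This two-step induction replaces, and entirely avoids, the Gaussian-elimination bookkeeping you describe; without it (or an equivalent device) your converse does not close.
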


\begin{lemma} \label{endregular}Let $A$ be a regular ring with local units and $F=\bigoplus_{i=1}^n Af_i$ with $f_i^2=f_i\in A$. Then ${\rm End}_{A}(F)$ is regular.  
\end{lemma}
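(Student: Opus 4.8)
The plan is to apply Lemma~\ref{lemregular} to the unital ring $B := \End_A(F)$. First I would record that $B$ is unital with identity $1_B = \id_F$, and that the canonical projections $\epsilon_i\colon F \to A f_i \hookrightarrow F$ form a family of orthogonal idempotents in $B$ with $\sum_{i=1}^n \epsilon_i = \id_F = 1_B$. Thus Lemma~\ref{lemregular} reduces the whole problem to the following: for every pair $i,j$ and every $x \in \epsilon_i B \epsilon_j$ there is some $y \in \epsilon_j B \epsilon_i$ with $xyx = x$.

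Next I would identify the corners $\epsilon_i B \epsilon_j$ explicitly. An element of $\epsilon_i B \epsilon_j$ is an endomorphism supported on the $j$-th summand with image in the $i$-th summand, hence is determined by a homomorphism $A f_j \to A f_i$; and evaluation at the generator gives the standard isomorphism $\Hom_A(A f_j, A f_i) \cong f_j A f_i$, $\phi \mapsto \phi(f_j)$. A direct computation with these identifications shows that composition in $B$ corresponds to multiplication in $A$: if $x \in \epsilon_i B \epsilon_j$ corresponds to $a \in f_j A f_i$ and $y \in \epsilon_j B \epsilon_i$ corresponds to $b \in f_i A f_j$, then $xyx \in \epsilon_i B \epsilon_j$ corresponds to $aba \in f_j A f_i$. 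Consequently the required relation $xyx = x$ in $B$ is equivalent to $aba = a$ in $A$.

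It then remains to solve $aba = a$ with $b$ of the prescribed form. Given $a = f_j a f_i \in f_j A f_i$, regularity of $A$ provides some $c \in A$ with $aca = a$, and I would set $b := f_i c f_j \in f_i A f_j$. Using the absorption identities $a f_i = a = f_j a$ (immediate from $a = f_j a f_i$ together with $f_i^2 = f_i$, $f_j^2 = f_j$), one computes $aba = f_j a f_i c f_j a f_i = f_j\,(aca)\,f_i = f_j a f_i = a$. Hence $b$ translates back to the desired $y \in \epsilon_j B \epsilon_i$, and Lemma~\ref{lemregular} yields that $B = \End_A(F)$ is regular.

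The bookkeeping is routine; the only genuine points requiring care are tracking the order of composition in the corner-ring identification, so that the relation comes out as $xyx=x \Leftrightarrow aba=a$ rather than its reverse, and the observation that the idempotents $f_i,f_j$ are absorbed by $a$. This last absorption is exactly the mechanism that lets the regularity of the non-unital ring $A$ pass to the unital ring $B$, and it is where the hypothesis that $F$ is built from the idempotent-generated ideals $Af_i$ is used.
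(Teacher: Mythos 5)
Your proof is correct and follows essentially the same route as the paper: the paper likewise identifies $\End_A(F)$ with the matrix ring whose $(i,j)$-corners are the sets $f_iAf_j$, invokes Lemma~\ref{lemregular} for the orthogonal idempotents summing to the identity, and reduces to finding, for $a$ in one corner, an element $b$ in the opposite corner with $aba=a$. The only difference is cosmetic: you spell out the absorption computation $aba = f_j(aca)f_i = a$, which the paper states without proof.
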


\begin{proof}  Recall that $\varphi: \Hom_A(Ae, M)\xra eM, f\mapsto f(e)$ is an isomorphism with $e\in A$ an idempotent. Note that $\varphi$ is surjective, since for any $em\in eM$ there exists $g:Ae\xra M, ae\mapsto aem$ such that $\varphi(g)=em$. It follows that 
\begin{equation}\label{bgbg}
\End_A(F)\cong 
\begin{pmatrix}
f_1Af_1& f_1Af_2&\cdots &f_1Af_n\\
f_2Af_1&f_2Af_2&\cdots &f_2Af_n\\
\vdots&\vdots&&\vdots\\
f_nAf_1&f_nAf_2&\cdots &f_nAf_n\\
\end{pmatrix}
\end{equation}
 as rings.  Denote the matrix ring in (\ref{bgbg}) by $A'$ and $e_i$ the elementary matrix with $f_i$ in $(i, i)$-th position and all other entries zero. Then $e_1+\cdots+e_n=1_{A'}$, where $1_{A'}$ is the identity for the matrix ring $A'$. Since $A$ is regular, for any $x\in f_iAf_j$ there exists $y\in f_jAf_i$ such that $xyx=x$. It follows that for any $x'\in e_iA'e_j$ there exists $y'\in e_jA'e_i$ such that $x'y'x'=x'$. By Lemma \ref{lemregular}, $\End_A(F)\cong A'$ is regular.
\end{proof}

We can now extend Theorem 1.11 of \cite{goodearlbook} to rings with local units. 

\begin{lemma}\label{summand} Let $M$ be a projective module over a regular ring $A$ with local units. Then any finitely generated submodule of $M$ is a direct summand of $M$.
\end{lemma}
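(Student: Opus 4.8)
The plan is to reduce the statement to the case of a finitely generated quasi-free module, where the regularity of the endomorphism ring established in Lemma~\ref{endregular} can be brought to bear, and then to globalize by a Dedekind/modular-law argument. The one elementary fact I would use repeatedly is the following: if $X\subseteq Y$ are submodules of a module $Z$ and $X$ is a direct summand of $Z$, then $X$ is a direct summand of $Y$. Indeed, writing $Z=X\oplus W$, the modular law gives $Y=X\oplus(Y\cap W)$ precisely because $X\subseteq Y$. Alongside this I would use the three recalled facts characterising unital, finitely generated, and projective modules as images and summands of (finitely generated) quasi-free $A$-modules, which replace the use of freeness in the unital proof \cite[Theorem 1.11]{goodearlbook}.

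First I would treat the core case: a finitely generated submodule $N$ of a finitely generated quasi-free module $F=\bigoplus_{i=1}^n Af_i$. By fact (2) there is a finitely generated quasi-free module $G$ and an epimorphism $\phi:G\to N$, which I regard (after composing with $N\hookrightarrow F$) as a homomorphism $G\to F$ with image $N$. The standard device is to realise $\phi$ as a single endomorphism of $F\oplus G$, namely $\theta(x,y)=(\phi(y),0)$. Since $F\oplus G$ is finitely generated quasi-free, $S:=\End_A(F\oplus G)$ is regular by Lemma~\ref{endregular}, so there is $\eta\in S$ with $\theta\eta\theta=\theta$. Then $e:=\theta\eta$ is idempotent in $S$, and $\theta=e\theta$ together with $e=\theta\eta$ forces $\Ima e=\Ima\theta=N\oplus 0$. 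Hence $N\oplus 0$ is a direct summand of $F\oplus G$, and applying the descent fact to $N\oplus 0\subseteq F\oplus 0\subseteq F\oplus G$ shows $N$ is a direct summand of $F$.

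Then I would globalize. For $N$ a finitely generated submodule of an arbitrary projective $M$, fact (3) presents $M$ as a direct summand of a quasi-free module $P=\bigoplus_{\lambda\in\Lambda}Af_\lambda$. As $N$ is finitely generated, its generators are supported on finitely many coordinates, so $N\subseteq P_0:=\bigoplus_{\lambda\in\Lambda_0}Af_\lambda$ for a finite $\Lambda_0\subseteq\Lambda$, where $P_0$ is a finitely generated quasi-free direct summand of $P$. By the core case $N$ is a direct summand of $P_0$, hence of $P$. Applying the descent fact once more to $N\subseteq M\subseteq P$ with $N$ a direct summand of $P$ yields that $N$ is a direct summand of $M$; that $N$ is moreover projective is automatic, being a summand of $M$.

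The step demanding the most care is the core case: one must manufacture the \emph{single} endomorphism $\theta$ of $F\oplus G$ whose image is \emph{exactly} $N\oplus 0$, so that it is the regularity of $\End_A(F\oplus G)$ rather than of $A$ itself that is invoked, and then check on the nose that $\Ima(\theta\eta)=N$ (not merely an isomorphic copy) before descending. The remaining subtlety is purely bookkeeping, namely verifying that finite generation genuinely confines $N$ to a finite sub-quasi-free summand $P_0$ of the possibly infinite-rank $P$; this is exactly where facts (2) and (3) on quasi-free modules do the work that finite-rank freeness does in the unital argument.
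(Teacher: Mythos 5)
Your proof is correct and follows essentially the same route as the paper's: reduce to a finitely generated quasi-free summand containing the submodule, realise that submodule as the image of an endomorphism of a finitely generated quasi-free module built by adjoining a quasi-free cover (your $F\oplus G$ is the paper's $H=G\oplus F'$), invoke Lemma~\ref{endregular} to replace that endomorphism by an idempotent with the same image, and descend via the modular law. The only differences are presentational — you state the descent fact and the core case explicitly, where the paper inlines them and constructs the quasi-free cover $F'$ by hand as $n$ copies of $Ae$.
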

\begin{proof} There exists a quasi-free module $F=\bigoplus_{i\in I} Ae_i$ over $A$ with $I$ a set which contains $M$ as a direct summand. Given any finitely generated submodule $K$ of $M$, we claim that $F$ has a finitely generated quasi-free direct summand $G$ which contains $K$. Suppose that $k_1, \cdots, k_n$ are generators for $K$. We can write them as expressions $k_i=\sum_{j} a_{ij}e_{ij}$ in $F$ with finitely many nonzero $a_{ij}\in A$ and $ij\in I$. We collect all the idempotents $e_{ij}$'s which appear in the expressions of all the $k_i$'s. We may set $G=\bigoplus_{i} \bigoplus_{j} Ae_{ij}$, which is a finitely generated quasi-free direct summand of $F$ containing $K$. It suffices to prove that $K$ is a direct summand of $G$, then $K$ is a direct summand of $M$, since $K$ is contained in $M$.

In order to prove that $K$ is a direct summand of $G$, we take $H=G\bigoplus F'$, where $F'$ is $n$-copies of the quasi-free module $Ae$ with $e\in A$ an idempotent such that $ek_i=k_i$ for all $i=1,\cdots, n$. Observe that there exists a surjective $A$-module homomorphism from $F'$ to $ K$ such that $ae$ of the $i$-th copy is sent to $aek_i$ for $a\in A$ and $i=1, \cdots, n$. Thus we have a surjective $A$-module homomorphism from $H$ to $K$.
It follows that there exists $f\in \End_A(H)$ such that $fH=K$. By Lemma \ref{endregular}, $\End_A(H)$ is regular. Then there exists $g\in \End_A(H)$ such that $fgf=f$. Consequently, $fg$ is an idempotent in $\End_A(H)$ such that $fgH=fH=K$, whence $K$ is a direct summand of $H$. Since $K$ is contained in $G$, $K$ is a direct summand of $G$. 
\end{proof}

Now we state the equivalent conditions for a von Neumann regular ring with local units.

\begin{prop} \label{pkhti1}
Let $A$ be a ring with local units. The following statements are equivalent. 

\begin{enumerate}[\upshape(1)]
\item  $A$ is a von Neumann regular ring;

\item Any finitely generated left (right) ideal of $A$ is generated by an idempotent;

\item   Any left (right) $A$-module is flat. 
\end{enumerate}
\end{prop}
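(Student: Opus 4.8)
The plan is to establish the cycle of implications $(1)\Rightarrow(2)\Rightarrow(3)\Rightarrow(1)$, leaning on the lemmas already proved and keeping track of two recurring non-unital subtleties: the splitting $A=eA\oplus(1-e)A$ must be replaced by an image--kernel decomposition of a multiplication operator, and the left/right bookkeeping should be handled by observing that von Neumann regularity (the condition $x\in xAx$) is left--right symmetric, so it suffices to argue on one side and pass to $A^{\mathrm{op}}$ for the other.

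For $(1)\Rightarrow(2)$, let $L=Aa_1+\cdots+Aa_n$ be a finitely generated left ideal. Using that $A$ has local units I would choose an idempotent $e\in A$ with $a_ie=a_i$ for all $i$, so that $L\sub Ae$, where $Ae$ is quasi-free and hence projective. Since $A$ is regular, Lemma~\ref{summand} applies and shows that $L$ is a direct summand of $Ae$; write $\pi\in\End_A(Ae)$ for the associated idempotent projection with $\Ima\pi=L$. I would then transport $\pi$ through the isomorphism $\End_A(Ae)\cong eAe$ used in the proof of Lemma~\ref{endregular} (via $f\mapsto f(e)$): the element $g=\pi(e)\in eAe$ is an idempotent, and $\pi(ae)=a\pi(e)=ag$ gives $L=\Ima\pi=Ag$. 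Thus $L$ is generated by the idempotent $g$. The right-ideal case follows by running the same argument in $A^{\mathrm{op}}$.

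For $(2)\Rightarrow(3)$, I would verify flatness of an arbitrary left module $N$ through Lemma~\ref{charflat}(3), i.e.\ by showing $J\otimes_A N\xra JN,\ j\otimes n\mapsto jn$ is injective for every finitely generated right ideal $J$. By $(2)$ we have $J=eA$ for some idempotent $e$, and the right-$A$-linear map $A\xra A,\ a\mapsto ea$, is idempotent, giving a splitting $A_A=eA\oplus\Ker$. Hence $J\hookrightarrow A$ is a split monomorphism of right modules, so $J\otimes_A N\xra A\otimes_A N$ is injective; composing with the standard isomorphism $A\otimes_A N\cong N$ for unital $N$ identifies this with $J\otimes_A N\xra JN$, which is therefore injective. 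So $N$ is flat, and right modules are handled by symmetry. Finally, for $(3)\Rightarrow(1)$, given $x\in A$ I would apply $(3)$ to the cyclic module $A/Ax$: it is flat, so Lemma~\ref{lemsfrin} yields $y\in yAx$ for every $y\in Ax$; since $x=ex\in Ax$ by local units, taking $y=x$ gives $x\in x(Ax)=xAx$, whence $A$ is regular.

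The step I expect to require the most care is $(1)\Rightarrow(2)$: Lemma~\ref{summand} only delivers the abstract statement that $L$ is a direct summand, and the real content is reading off a \emph{generating idempotent} from the splitting via the identification $\End_A(Ae)\cong eAe$, together with checking that $\Ima\pi$ is exactly $Ag$. The other genuinely non-unital point, used in $(2)\Rightarrow(3)$, is that one cannot write $A=eA\oplus(1-e)A$; I would instead use the image--kernel splitting of $a\mapsto ea$ and the isomorphism $A\otimes_A N\cong N$ (cf.\ the proof of Proposition~\ref{hytrewe}) to turn ``direct summand'' into the required injectivity.
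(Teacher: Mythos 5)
Your proof is correct, but it is organised quite differently from the paper's. The paper writes out only $(1)\Rightarrow(3)$ --- expressing an arbitrary unital module as $F/M$ with $F$ quasi-free, applying Lemma~\ref{summand} to the finitely generated submodules $K\sub M$, and concluding via the direct-limit criterion of Lemma~\ref{charflat}(4) that $F/M=\varinjlim F/K$ is flat --- while deferring $(1)\Leftrightarrow(2)$ and $(3)\Rightarrow(1)$ to the unital arguments of Goodearl. You instead close the cycle $(1)\Rightarrow(2)\Rightarrow(3)\Rightarrow(1)$: you reach flatness through the ideal criterion of Lemma~\ref{charflat}(3) and the split monomorphism $eA\hookrightarrow A$ rather than through direct limits, and your $(3)\Rightarrow(1)$, via Lemma~\ref{lemsfrin} applied to $A/Ax$, is an explicit rendering of what the paper leaves to the reader. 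Your $(1)\Rightarrow(2)$ is heavier than necessary --- the Goodearl-style computation ($xA=(xy)A$ with $xy$ idempotent when $xyx=x$, plus an induction to absorb further generators) avoids Lemma~\ref{summand} entirely --- but it is valid and has the merit of reusing the paper's own machinery; the only cosmetic point is that for left modules the map $f\mapsto f(e)$ is a ring \emph{anti}-isomorphism $\End_A(Ae)\xra eAe$, which is harmless since it still carries idempotents to idempotents and your verification that $\Ima\pi=Ag$ does not depend on it. Each route buys something: the paper's direct $(1)\Rightarrow(3)$ handles all modules at once without passing through ideals, while yours makes the equivalence with (2) and the converse $(3)\Rightarrow(1)$ genuinely self-contained in the local-unit setting.
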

\begin{proof} First note that for a ring $A$ with local units, $a\in Aa$, for any $a\in A$. In the presence of Proposition~\ref{hytrewe} and Lemma~\ref{charflat}, 
the proofs for $(1)\Leftrightarrow(2)$ and $(3)\Rightarrow(1)$  are similar to the case of rings with unit~\cite[Theorem~1.1, Corollary~1.13]{goodearlbook} and are omitted here.

$(1)\Rightarrow(3)$ Suppose that $A$ is regular.  Since any module in $A\-\MOD$ is an image of a quasi-free $A$-module, if we show that 
for a quasi-free left $A$-module $F$ and a submodule $M$ of $F$, $F/M$ is flat then (3) follows. 
 Suppose $K$ is a finitely generated submodule of $M$. By Lemma \ref{summand}, $K$ is a direct summand of $F$ and thus $F/K$ is projective. It is easy to see that $F/M$ is a direct limit of the flat modules $F/K$, where $K$ ranges over all the finitely generated submodules of $M$. Now thanks to Lemma \ref{charflat} (1) and (4), $F/M$ is a flat module in $A\-\MOD$. \end{proof}

\section{Simple flat property for Steinberg algebras}
\label{section3}

In this section, we give a necessary condition for a Steinberg algebra associated to an ample Hausdorff groupoid with coefficients in a field to be a left SF-ring. Specialising to the graph groupoid case, we show that for Leavitt path algebras over a field SF-rings coincide with von Neumann regular rings. We briefly recall the notion of topological groupoids and Steinberg algebras. 

\subsection{Graded groupoids}\label{grogth33}

A groupoid is a small category in which every morphism is invertible. It can also be
viewed as a generalization of a group which has a partial binary operation.  Let $\mG$ be a
groupoid. If $x\in\mG$, $d(x)=x^{-1}x$ is the \emph{domain} of $x$ and $r(x)=xx^{-1}$ is
its \emph{range}. The pair $(x,y)$ is composable if and only if $r(y)=d(x)$. The set
$\mG^{(0)}:=d(\mG)=r(\mG)$ is called the \emph{unit space} of $\mG$. Elements of
$\mG^{(0)}$ are units in the sense that $xd(x)=x$ and $r(x)x=x$ for all $x \in \mG$. For
$U,V\in\mG$, we define
\[
    UV=\big \{\a\b \mid \a\in U,\b\in V \text{ and } r(\b)=d(\a)\big\}.
\]

A topological groupoid is a groupoid endowed with a topology under which the inverse map
is continuous, and such that composition is continuous with respect to the relative
topology on $\mG^{(2)} := \{(\b,\g) \in \mG \times \mG: d(\b) = r(\g)\}$ inherited from
$\mG\times \mG$. An \emph{\'etale} groupoid is a topological groupoid $\mG$ such that the
domain map $d$ is a local homeomorphism. In this case, the range map $r$ is also a local
homeomorphism. An \emph{open bisection} of $\mG$ is an open subset $U\subseteq \mG$ such
that $d|_{U}$ and $r|_{U}$ are homeomorphisms onto an open subset of $\mG^{(0)}$. We say
that an \'etale groupoid $\mG$ is \emph{ample} if there is a basis consisting of compact
open bisections for its topology.


Let $\G$ be a discrete group  with identity
$\varepsilon$ and $\mG$ a topological groupoid. A $\G$-grading of $\mG$ is
a continuous function $c : \mG \xra \G$ such that $c(gh)=c(g)c(h)$ for all $(g,h)
\in \mG^{(2)}$; such a function $c$ is called a continuous $1$-\emph{cocycle} on $\mG$. In this case, we call $\mG$ a \emph{$\G$-graded groupoid} and we write $\mG=\bigsqcup_{\g\in \G}\mG_{\g}$, where $\mG_\g=c^{-1}(\g)$. Note that $\mG_\g \mG_\beta \subseteq \mG_{\g\beta}$. For $\g
\in \G$, we say that $X\subseteq \mG$ is $\g$-graded if $X\subseteq \mG_\g$. We
always have $\mG^{(0)} \subseteq \mG_\varepsilon$, so $\mG^{(0)}$ is
$\varepsilon$-graded. We write $B^{\rm co}_{\g}(\mG)$ for the collection of all
$\g$-graded compact open bisections of $\mG$ and
\[
B_{*}^{\rm co}(\mG)=\bigcup_{\g\in\G} B^{\rm co}_{\g}(\mG).
\]

A subset $U$ of the
unit space $\mG^{(0)}$ of $\mG$ is \emph{invariant} if $d(\g)\in U$ implies $r(\g)\in U$;
equivalently,
\[
    r(d^{-1}(U))=U=d(r^{-1}(U)).
\] Given an element $u\in \mG^{(0)}$, we denote by $[u]$ the smallest invariant subset of
$\mG^{(0)}$ which contains $u$. Then $$r(d^{-1}(u))=[u]=d(r^{-1}(u)).$$  We call $[u]$ an
\emph{orbit}. We denote $\mG_u=\{\g\in\mG \mid d(\g)=u\}$ and  $\mG^u=\{\g\in\mG \mid r(\g)=u\}$ and 
$\mG_u^v=\mG_u \cap \mG^v$.
The \emph{isotropy group} at a unit $u$ of $\mG$ is the group  $\mG_u^u =\{\g\in\mG \mid
d(\g)=r(\g)=u\}.$ A unit $u\in\mG^{(0)}$ is called \emph{$\Gamma$-aperiodic} if $\mG_u^u
\subseteq \mG_\varepsilon$, otherwise $u$ is called \emph{$\Gamma$-periodic}. For an
invariant subset $W\subseteq \mG^{(0)}$, we denote by $W_{\rm ap}$ the collection of
$\Gamma$-aperiodic elements of $W$ and by $W_{\rm p}$ the collection of $\Gamma$-periodic
elements of $W$. Then $W=W_{\rm ap} \bigsqcup W_{\rm p}.$
If $W=W_{\rm ap}$, we say that $W$ is \emph{$\Gamma$-aperiodic}; If $W=W_{\rm p}$, we say
that $W$ is \emph{$\Gamma$-periodic}.


\subsection{Steinberg algebras}\label{subsetion32}

Steinberg algebras were introduced in~\cite{st} in the context of discrete inverse
semigroup algebras and independently in \cite{cfst} as a model for Leavitt path algebras.

Let $\mG$ be an ample Hausdorff topological groupoid. Suppose that $R$ is a unital commutative ring. Consider $A_R(\mG) = C_c(\mG, R)$, the space of
compactly supported continuous functions from $\mG$ to $R$ with $R$ given the discrete topology.
Then $A_R(\mG)$ is an $R$-algebra with addition defined point-wise and
multiplication is given by convolution
$$(f*g)(\g) = \sum_{\g=\a\b}f(\a)g(\b).$$
It is useful to note that $$1_{U}*1_{V}=1_{UV}$$ for compact open bisections $U$ and $V$
(see \cite[Proposition 4.5(3)]{st}). With this structure, $A_R(\mG)$ is an algebra called the \emph{Steinberg algebra}  associated to $\mG$. The algebra $A_R(\mG)$ can also be realised as the span of characteristic functions of the form $1_U$ where $U$ is a compact open bisection (see \cite[Lemma 3.3]{cfst}). 

By \cite[Lemma 2.2]{cm} and \cite[Lemma
3.5]{cfst}, every element $f\in A_{R}(\mG)$ can be expressed as
\begin{equation}
\label{express}
f=\sum_{U\in F}a_{U}1_{U},
\end{equation}
where $F$ is a finite subset of mutually disjoint elements of $B_{*}^{\rm co}(\mG)$.

Recall from \cite[Lemma 3.1]{cs} that if $\mG=\bigsqcup_{\g\in G}\mG_\g$ is a $G$-graded groupoid, then the Steinberg algebra $A_R(\mG)$ is a $\G$-graded
algebra with homogeneous components
\begin{equation}\label{hgboat}
    A_{R}(\mG)_{\g} = \{f\in A_{R}(\mG)\mid \supp(f)\subseteq \mG_\g\}.
\end{equation}
The family of all idempotent elements of $A_{R}(\mG^{(0)})$ is a set of local units for
$A_{R}(\mG)$ (\cite[Lemma 2.6]{cep}). Here, $A_{R}(\mG^{(0)})\subseteq A_{R}(\mG)$ is a
subalgebra. Note that any ample Hausdorff
groupoid admits the trivial cocycle from $\mG$ to the trivial group $\{\varepsilon\}$,
which gives rise to a trivial grading on $A_{R}(\mG)$.

\subsection{Regularity of Steinberg algebras} \label{subsection33}

In this section we give necessary conditions for a Steinberg algebra to be von Neumann regular. The complete characterisation of regular Steinberg algebras is remained to be determined.

For an ample Hausdorff groupoid $\mG$, and a commutative unital ring $R$, we denote by $R\mG$ the (classical) groupoid ring and by $R\mG_u^v\subseteq R\mG$, the set of finite sums $\sum r_i g_i$, where $g_i \in \mG_u^v$ and $r_i\in R$.  Note that $R\mG_v^w R\mG_u^v \subseteq R\mG_u^w$, for $u,v,w \in \mG^{(0)}$. In particular $R\mG_u^u$ is the group ring on the isotropy group $\mG_u^u$. For $u,v\in \mG^{(0)}$ define the map 
\begin{align*}
\phi_u^v:A_R(\mG) \longrightarrow & R\mG_u^v\\
f \longrightarrow & f|_{\mG_u^v}.
\end{align*} Since the support of $f\in A_R(\mG)$ is compact and $\mG_u$ is closed in $\mG$ and has an induced discrete topology, $\supp( f|_{\mG_u^v})$ is a finite set and thus can be represented in the $ R\mG_u^v$. On the other hand, define a (non-canonical) map 
\[\psi_u^v: R\mG_u^v \rightarrow A_R(\mG),\]
as follows: For a finite sum $x=\sum r_i g_i \in R\mG_u^v$, where $g_i\in \mG_u^v$, consider disjoint compact open bisections $V_i$ containing $g_i$ and the element $f=\sum_i r_i 1_{V_i}\in A_R(\mG)$, and define $\psi_u^v(x)=f$. Note that for $x\in R\mG_u^v$
\begin{equation}\label{hfytdwp}
\phi_u^v \psi_u^v (x)=x.
\end{equation}

Observe that if $f\in A_R(\mG)$ such that $\supp(f) \cap \mG_u \subseteq \mG_u^v$, then for any $g\in A_R(\mG)$ and $w\in \mG^{(0)}$, we have 
\begin{equation}\label{hnghtu1}
\phi_u^w(g*f)=\phi_v^w(g)\phi_u^v(f).
\end{equation}
Similarly, 
for $f\in A_R(\mG)$ such that $\supp(f) \cap \mG^u \subseteq \mG_v^u$, then for any $g\in A_R(\mG)$ and $w\in \mG^{(0)}$, we have 
\begin{equation}\label{hnghtu2}
\phi_w^u(f*g)=\phi_v^u(f)\phi_w^v(g).
\end{equation}

\begin{prop}\label{htrbdgew}
Let $\mG$ be an ample Hausdorff groupoid and $R$ a commutative ring with unit. If $A_R(\mG)$ is regular then  for any $x \in R\mG_u^v$ there is a $y\in R\mG_v^u$ such that $xyx=x$. 
\end{prop}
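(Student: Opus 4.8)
The plan is to exploit the two maps $\phi_u^v$ and $\psi_u^v$ relating $A_R(\mG)$ and the groupoid ring $R\mG_u^v$, together with the multiplicativity relations \eqref{hnghtu1} and \eqref{hnghtu2}. Given $x \in R\mG_u^v$, I would first lift it to an element $f = \psi_u^v(x) \in A_R(\mG)$, so that by \eqref{hfytdwp} we have $\phi_u^v(f) = x$. The key structural point is that this particular lift $f = \sum_i r_i 1_{V_i}$ has support inside $\bigsqcup_i V_i$, and the $V_i$ are bisections chosen so that each meets $\mG_u$ only in the single point $g_i \in \mG_u^v$; hence $\supp(f) \cap \mG_u \subseteq \mG_u^v$ and symmetrically $\supp(f) \cap \mG^v \subseteq \mG_u^v$. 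This is what will let me apply the multiplicativity relations in both variables.

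Next I would invoke regularity of $A_R(\mG)$: there exists $h \in A_R(\mG)$ with $f * h * f = f$. The strategy is then to push this identity down through $\phi$ and set $y := \phi_v^u(h)$, aiming to show $xyx = x$. First I would apply $\phi_u^v$ to $f*h*f = f$. Reading $f*(h*f)$ and using \eqref{hnghtu1} with the first factor $f$ (whose support meets $\mG_u$ only in $\mG_u^v$) would give $\phi_u^w(f * (h*f)) = \phi_v^w(f) \, \phi_u^v(h*f)$ for the appropriate units; applying this carefully with $w = v$ and then peeling off the remaining convolution with \eqref{hnghtu2} should yield $x = \phi_u^v(f) = \phi_u^v(f) \, \phi_v^u(h) \, \phi_u^v(f) = x y x$, where $y = \phi_v^u(h)$. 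The bookkeeping with indices is the routine part: one simply has to track which units appear and confirm the support hypotheses of \eqref{hnghtu1} and \eqref{hnghtu2} hold at each application.

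The main obstacle I anticipate is verifying the support conditions so that the multiplicativity relations actually apply to the intermediate products $h*f$ and $f*h$, whose supports are not under our direct control (only $f$'s support is). The relations \eqref{hnghtu1} and \eqref{hnghtu2} each require a support hypothesis on the \emph{distinguished} factor ($f$), not on the arbitrary factor ($g$ or $h$), so the trick is to always arrange that the factor with controlled support is the one playing the role of $f$ in the relation. Concretely, in the product $f*h*f$ the leftmost and rightmost occurrences of $f$ are the ones satisfying the support restrictions, and one must associate the triple product so that at each step the restriction is invoked on an appropriate copy of $f$. I would also need to confirm that $\phi_v^u(h) \in R\mG_v^u$, which is immediate from the definition of $\phi_v^u$ as restriction to $\mG_v^u$, and that the composition of restricted functions matches multiplication in the groupoid ring, which is exactly the content of \eqref{hnghtu1}–\eqref{hnghtu2}. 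Once the support hypotheses are confirmed, the conclusion $xyx = x$ follows directly.
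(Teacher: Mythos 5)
Your proposal is correct and follows essentially the same route as the paper: lift $x$ to $f=\psi_u^v(x)$, note that the construction of $\psi_u^v$ guarantees $\supp(f)\cap\mG_u\subseteq\mG_u^v$ and $\supp(f)\cap\mG^v\subseteq\mG_u^v$, apply regularity to get $f*g*f=f$, and push this through $\phi_u^v$ using \eqref{hnghtu1} and \eqref{hnghtu2} on the two outer copies of $f$ to obtain $xyx=x$ with $y=\phi_v^u(g)$. Your attention to associating the triple product so the support hypotheses land on the distinguished factor is exactly the point the paper relies on.
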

\begin{proof}
Let $x \in R\mG_u^v$. Then $f=\psi_u^v(x) \in  A_R(\mG)$. Since $A_R(\mG)$ is regular we have $f*g*f=f$, for some $g\in A_R(\mG)$.  Thus  
\[\phi_u^v(f*g*f)=\phi_u^v(f).\]
Notice that by the construction of $\psi_u^v$, we have $\supp(f) \cap \mG_u \subseteq \mG_u^v$ and $\supp(f) \cap \mG^v \subseteq \mG_u^v$. Now using (\ref{hnghtu1}) and (\ref{hnghtu2}) we can write
\[\phi_u^v(f)\phi_v^u(g)\phi_u^v(f)=\phi_u^v(f).\]
By~(\ref{hfytdwp}) $\phi_u^v(f)=\phi_u^v(\psi_u^v(x))=x$ and $y:=\phi_v^u(g)\in R\mG_v^u$. Substituting these in the above equation, we obtain $xyx=x$.
\end{proof}

Based on the work of Auslander, Connell~\cite[Theorem~3]{connell} gave a complete characterisation of regular group rings as follows:  A group ring $RG$ is regular if and only if $R$ is regular, $G$ is locally finite and the order of any finite subgroup of $G$ is a unit in $R$. Combining this with Proposition~\ref{htrbdgew} we obtain a necessary conditions for a Steinberg algebra to be regular. 

\begin{cor}\label{chrisrea}
Let $\mG$ be an ample groupoid and $R$ a commutative ring with unit. If $A_R(\mG)$ is regular, then $R$ is regular, $\mG_u^u$ are locally finite and $1\in R$ is divisible by the order of any finite subgroup of $\mG_u^u$, $u \in \mG^{(0)}$. 
\end{cor}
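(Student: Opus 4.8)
The plan is to deduce this statement directly from Proposition~\ref{htrbdgew} and Connell's characterisation of regular group rings, with the only real work being the specialisation to the diagonal isotropy groups. The key observation is that Proposition~\ref{htrbdgew} already records, in its full generality, the strong regularity-type condition $xyx=x$ across all the sets $R\mG_u^v$; restricting to $u=v$ will turn this into honest von Neumann regularity of a group ring, at which point Connell's theorem finishes the job.

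First I would fix a unit $u\in\mG^{(0)}$ and apply Proposition~\ref{htrbdgew} with $v=u$. Since $A_R(\mG)$ is assumed regular, the proposition yields that for every $x\in R\mG_u^u$ there exists $y\in R\mG_u^u$ with $xyx=x$. As noted just before Proposition~\ref{htrbdgew}, the ring $R\mG_u^u$ is precisely the group ring on the isotropy group $\mG_u^u$. Hence the displayed condition is exactly the statement that the group ring $R[\mG_u^u]$ is von Neumann regular in the usual unital sense.

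Next I would invoke Connell's theorem, quoted in the paragraph preceding the corollary: a group ring $RG$ is regular if and only if $R$ is regular, $G$ is locally finite, and the order of every finite subgroup of $G$ is a unit in $R$. Applying this to $G=\mG_u^u$, the regularity of $R[\mG_u^u]$ forces $R$ to be regular, $\mG_u^u$ to be locally finite, and the order of every finite subgroup of $\mG_u^u$ to be a unit in $R$ (equivalently, $1\in R$ is divisible by that order). Letting $u$ range over $\mG^{(0)}$ gives the full conclusion.

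I do not expect any genuine obstacle here, since the corollary is essentially a translation: the content is carried entirely by Proposition~\ref{htrbdgew} together with Connell's theorem. The only points meriting care are the identification of the diagonal condition ``$xyx=x$ for all $x$'' with the defining property of a von Neumann regular ring, and the fact, already established in the text, that $R\mG_u^u$ is a bona fide group ring so that Connell's result applies verbatim. One minor degenerate case is when $\mG^{(0)}=\varnothing$, where $A_R(\mG)=0$ is trivially regular and the conclusions about the $\mG_u^u$ are vacuous; otherwise every nonempty isotropy group contains the identity at $u$ and the argument above runs without change.
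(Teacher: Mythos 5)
Your proposal is correct and follows the paper's own proof exactly: specialise Proposition~\ref{htrbdgew} to $v=u$ to conclude that the group ring $R\mG_u^u$ is von Neumann regular, then apply Connell's theorem. The extra care you take in identifying the diagonal condition with regularity of the group ring is implicit in the paper but not a different argument.
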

\begin{proof}
By  Proposition~\ref{htrbdgew} if $A_R(G)$ is regular, then $R\mG_u^u$ is regular for any $u \in \mG^{(0)}$. Now by the result of Connell, the corollary follows.  
\end{proof}

We can use this to recover one direction of Abram-Rangaswamy theorem of regularity of Leavitt path algebras~\cite{abramsranga} (see the statement after Corollary~\ref{khtfgtrdd}).

\subsection{SF property for Steinberg algebras}
\label{subsection34}

Throughout this subsection we work with a Steinberg algebras with coefficients in a field $R$. The main result of this section is Proposition~\ref{hyhygt} which gives a necessary condition for Steinberg algebras to be SF-rings. In order to do this, we need to consider certain simple modules of Steinberg algebras which was first considered in \cite{ahls} in the graded setting.

 For $u \in \mG^{(0)}$, let $R[u]$ be the free $R$-module with the basis the set $[u]$, where $[u]$ is the orbit of $u$ (see~\S\ref{grogth33}).  There is a function $f_U:\mG^{(0)}\xra R[u]$ for any compact open bisection $U$ of $\mG$ such that the support of $f_U$ is contained in $d(U)\cap [u]$ and for $w=d(x)\in d(U)\cap [u]$ with $x\in U$, $f_U(w)=r(x)$. This uniquely induces an action of $A_R(\mG)$ on $R[u]$ such that $1_U(w)=f_U(w)$ for $w\in [u]$ which makes $R[u]$ a left $A_R(\mG)$-module (see \cite[Proposition 4.3]{bcfs}).  We show that $R[u]$ is a simple left $A_R(\mG)$-module. Suppose that $V\neq 0$ is a $A_{R}(\mG)$-submodule of $R[u]$. Take a nonzero element $x\in V$. Fix nonzero
elements $r_i \in R$ and pairwise distinct $u_i \in [u]$ such that
$x=\sum_{i=1}^{m}r_{i}u_{i}$. Since $\mG^{(0)}$ is Hausdorff, there exist disjoint open subsets $X_{i}$ of
$\mG^{(0)}$ such that $u_{i}\in X_{i}$ for all $i$. Since $\mG$ is ample, we can choose
compact open sets $B_{i}\subseteq X_{i}$ such that $u_{i}\in B_{i}$, for all $i=1, \cdots,
m$. Now
\begin{equation*}
1_{B_{1}}\cdot x
    = 1_{B_{1}}\cdot \sum_{i=1}^{m}r_{i}u_{i}
    = \sum_{i=1}^{m}r_{i}(1_{B_{1}}\cdot u_{i})
    = r_{1}u_1.
\end{equation*}
Thus $u_1 \in V$ as $R$ is a field. Fix $v \in [u]$ and
choose $x\in\mG$ such that $d(x)=u_1$ and $r(x)=v$. Fix a compact open bisection $D$
containing $x$. Then $1_{D}\cdot u_1=f_{D}(u_1)=r(x)=v\in V$, giving $V=R[u]$. Thus
$R[u]$ is simple. 

We are in a position to state the main result of this section. We require our groupoid to be graded by a totally ordered group. In most applications the grade group is a torsion free abelian group. By~\cite{levi} an abelian group can be equipped with a total order if and only if it is torsion-free.

\begin{prop} \label{hyhygt}
Let $\mG$ be a $\G$-graded ample groupoid, where $c:\mG\rightarrow \Gamma$ is the cocycle, $R$ a field and $\G$ is a totally ordered group. If $A_R(\mG)$ is a left SF-ring, then the unit space $\mG^{(0)}$ of $\mG$ is aperiodic.  
\end{prop}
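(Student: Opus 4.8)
The plan is to argue by contradiction: suppose $\mG^{(0)}$ is not aperiodic, so there exists a $\G$-periodic unit $u\in\mG^{(0)}$, meaning the isotropy group $\mG_u^u$ is not contained in $\mG_\v$. I would then produce a nontrivial homogeneous element $g\in\mG_u^u$ with $c(g)\neq\v$, and work inside the simple module $R[u]$ together with the SF-characterisation of Theorem~\ref{sfchar}. Since $A_R(\mG)$ is a left SF-ring, every simple left module is flat; in particular $R[u]$ is flat, and by Lemma~\ref{lemsfrin} (applied to the annihilator ideal $I=\Ann(u)$, which is a maximal left ideal because $R[u]$ is simple and cyclic, generated by $u$) we get that for every $a\in I$ there is $b\in I$ with $ab=a$.

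The heart of the argument should be to choose a specific element $a\in I$ that encodes the periodic loop $g$ and then derive a contradiction with the total ordering on $\G$. Concretely, fix a compact open bisection $U\ni g$ that is $c(g)$-graded, and consider an element of the form $a=1_{B}-1_{U'}$ (or a suitable homogeneous combination) where $B$ is a compact open neighbourhood of $u$ in $\mG^{(0)}$ and $U'\subseteq U$ is chosen so that $d|_{U'}$ and $r|_{U'}$ both map onto $B\cap[u]$; the point is to build $a$ so that $a\cdot u=0$ in $R[u]$ (hence $a\in I$) while $a$ itself has a homogeneous component in degree $c(g)\neq\v$ that acts nontrivially. Applying the relation $a=ab$ with $b\in I$, I would pass to the value at $u$ of the relevant matrix coefficients $\phi_u^v$ and use the multiplicativity identities (\ref{hnghtu1}) and (\ref{hnghtu2}) to read off an equation inside the group ring $R\mG_u^u$ of the isotropy group.

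The main obstacle, and where the total ordering on $\G$ becomes essential, is to show that such an equation $a=ab$ in a $\G$-graded setting with $a$ supported in strictly positive (or strictly negative) degrees cannot hold. The standard mechanism is a \emph{degree/leading-term} argument: because $\G$ is totally ordered, any nonzero homogeneous decomposition has a well-defined minimal (and maximal) degree, and the identity $a=ab$ forces a comparison between the extreme degrees of $a$ and those of the product $ab$, which involves the degrees of $b$ shifted by the nontrivial degree $c(g)$. I expect that tracking the highest (or lowest) graded component of both sides, and using that $c(g)>\v$ strictly, yields a strict inequality that is impossible, thereby contradicting periodicity. I anticipate that the delicate part is setting up $a$ precisely enough that its action on $u$ vanishes while its leading homogeneous term survives in $R\mG_u^u$; once that is arranged, the ordered-group leading-coefficient bookkeeping closes the argument and forces $\mG_u^u\subseteq\mG_\v$, i.e. $\mG^{(0)}$ is aperiodic.
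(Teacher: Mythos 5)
Your proposal follows essentially the same route as the paper's proof: the paper also takes a periodic $u$ with $x\in\mG_u^u$, $c(x)=\alpha>\v$, forms the maximal left ideal $I=\ker(A_R(\mG)\to R[u])$, puts $1_U-1_V\in I$ (with $V$ an $\alpha$-graded bisection through $x$), applies Theorem~\ref{sfchar} to get $(1_U-1_V)b=1_U-1_V$, and runs exactly the lowest-degree comparison you describe to force $b=1_U\in I$, contradicting $1_U\cdot u=u\neq 0$. The only cosmetic difference is that the paper works directly with the homogeneous decomposition $b=\sum_i r_i 1_{W_i}$ rather than passing through the maps $\phi_u^v$ into the group ring $R\mG_u^u$, but the mechanism is the one you outline.
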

\begin{proof}
Let $u \in \mG^{(0)}$ and consider the orbit $[u]$ and the free $R$-module $R[u]$.  Consider the exact sequence of left $A_R(\mG)$-modules
\begin{align*} 
A_R(\mG) \stackrel{\phi}{\longrightarrow} & R[u] \longrightarrow 0,\\
1_V \longmapsto & 1_V . u
\end{align*}
 Since $R[u]$ is a simple $A_R(\mG)$-module, $I:=\ker(\phi)$ is a left maximal ideal of $A_R(\mG)$. Suppose $u\in \mG^{(0)}$ is a periodic element in the unit space.  Then there is $x\in \mG_u^u$ with $c(x)\not = \varepsilon$.  Without loss of generality we can assume that $c(x)=\alpha>\varepsilon.$ Since $\mG$ is ample, there is a compact open bisection $V\subseteq \mG_\alpha$ containing $x$ and a compact open set $U \subseteq \mG^{(0)}$ containing $u$.  By replacing $V$ by $UVU$ we can assume that $x\in V \subseteq \mG_\alpha$ and $s(V), r(V) \subseteq U$.   We consider the element $1_U -1_V \in A_R(\mG)$. Then 
\[(1_U-1_V)(u)=1_U(u) -1_V(u)=0.\] Thus $1_U-1_V \in I$. Since $A_R(\mG)$ is a SF-ring, by Theorem~\ref{sfchar},  $(1_U-1_V)b =1_U-1_V$, for some $b\in I$. Writing $b=\sum_i r_i1_{W_i}$, where $W_i$'s are disjoint homogeneous compact open bisections and $r_i$'s are nonzero in $R$, we have
\[1_U-1_V=(1_U-1_V)\sum_i r_i1_{W_i}=(1_U-1_V)1_U\sum_i r_i1_{W_i}= (1_U-1_V) \sum_i r_i1_U1_{W_i}=(1_U-1_V) \sum_i r_i 1_{UW_i}.\] Since $I$ is a left ideal of $A_R(\mG)$, $1_U\sum_i r_i1_{W_i} \in I$, thus without loss of generality we can assume that $r(W_i)\subseteq U$ for all $i$ and 
\[(I_U-1_V) \sum_i r_i1_{W_i}=1_U-1_V.\] It follows that 
\begin{equation}\label{compa}
\sum_i r_i1_{W_i}-\sum_i r_i1_{VW_i}=1_U-1_V.
\end{equation}
We first show that $c(W_i)\geq \varepsilon$ for all $i$.  If this is not the case, choose an $W_l$ with the smallest degree $c(W_l)$. If $VW_l \not = \emptyset$ then $c(VW_l) > c(W_l)$ as $c(V)>\varepsilon$ by our assumption. Then comparing the both sides of \eqref{compa}, the left hand side has negative degrees whereas the right hand side are non-negative. This is a contradiction, so $c(W_i)\geq \varepsilon$ for all $i$. Collecting $W_l$'s with $c(W_l)=\varepsilon$, by comparison in \eqref{compa}, we have 
$\sum_{c(W_l)=\varepsilon} r_l 1_{W_l}= 1_U$. Since $W_l$'s are disjoint, it follows that $W_l \subseteq U$ and $r_l=1$. Thus 
\[b=\sum_i r_i1_{W_i}=\sum_{c(w_i)>\varepsilon} r_i1_{W_i}+1_U.\] 
Substituting this into \eqref{compa}, we get
\[\sum_{c(w_i)>\varepsilon} r_i1_{W_i} - \sum_{c(w_i)>\varepsilon} r_i1_{VW_i}=0    .\] 
Again if $VW_i \not = \emptyset$ then  $c(VW_i) > c(W_i)$. Choosing $W_l$'s with the smallest degree, we have 
$\sum_l r_l 1_{W_l} =0$ which gives that $r_l=0$. This is a contradiction. This forces $b=\sum_{c(W_l)=\varepsilon} r_l 1_{W_l}= 1_U\in I$. But $1_U \not \in \ker(\phi)=I$. Therefore $u$ can not be periodic and thus the unit space $\mG^{(0)}$ of $\mG$ is aperiodic.  
\end{proof}

We are in a position to specialise to the case of Leavitt path algebras. We briefly recall the groupoid of a graph. Let $E=(E^{0}, E^{1}, r, s)$ be a directed graph (see~\cite{lpabook} for conventions on graphs). We denote by $E^{\infty}$ the set of
infinite paths in $E$ and by $E^{*}$ the set of finite paths in $E$. Set
\[
X := E^{\infty}\cup  \{\mu\in E^{*}  \mid   r(\mu) \text{ is not a regular vertex}\}.
\]
Let
\[
\mG_{E} := \big \{(\a x,|\a|-|\b|, \b x) \mid   \a, \b\in E^{*}, x\in X, r(\a)=r(\b)=s(x)\big\}.
\]
We view each $(x, k, y) \in \mG_{E}$ as a morphism with range $x$ and source $y$. The
formulas $(x,k,y)(y,l,z)= (x,k + l,z)$ and $(x,k,y)^{-1}= (y,-k,x)$ define composition
and inverse maps on $\mG_{E}$ making it a groupoid with $\mG_{E}^{(0)}=\{(x, 0, x) \mid
x\in X\}$. One can show that $\mG_E$ is a $\Z$-graded ample Hausdorff groupoid and establish a $\Z$-graded isomorphism $L_R(E) \cong A_R(\mG_E)$ (\cite[Example 3.2]{cs}).

Specialising Proposition~\ref{hyhygt} to the graph groupoid case allows us to answer Ramamurthi's question in positive for Leavitt path algebras.  

\begin{cor}\label{khtfgtrdd} Let $E$ be any directed graph and $R$ a field. If the Leavitt path algebra $L_R(E)$ is a left SF-ring, then $L_R(E)$ is regular.
\end{cor}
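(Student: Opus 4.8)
The plan is to reduce the statement to Proposition~\ref{hyhygt} through the $\Z$-graded isomorphism $L_R(E)\cong A_R(\mG_E)$ recalled above, and then to apply the Abrams--Rangaswamy theorem. First I would transport the hypothesis across this isomorphism: being a left SF-ring is a property of the module category $A\-\MOD$ alone (namely, flatness of every simple left module), hence it is preserved by any algebra isomorphism. So if $L_R(E)$ is a left SF-ring, then so is $A_R(\mG_E)$. Since $\mG_E$ is a $\Z$-graded ample Hausdorff groupoid with cocycle $c(x,k,y)=k$ and $\Z$ is a totally ordered group, Proposition~\ref{hyhygt} applies and yields that the unit space $\mG_E^{(0)}$ is $\Z$-aperiodic.

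The one step carrying genuine content is the translation of aperiodicity of $\mG_E^{(0)}$ into acyclicity of $E$, which I would establish by contraposition. Suppose $E$ has a cycle $\mu$, so $s(\mu)=r(\mu)$ and $|\mu|>0$. Then $x:=\mu^{\infty}\in E^{\infty}\subseteq X$ satisfies $\mu x=x$, and setting $u:=(x,0,x)\in\mG_E^{(0)}$ the triple $(x,|\mu|,x)$ lies in $\mG_E$ (take $\a=\mu$ and $\b$ the empty path at $s(\mu)$) and belongs to the isotropy group $(\mG_E)_u^u$, with degree $c(x,|\mu|,x)=|\mu|\neq 0$. Thus $u$ is $\Z$-periodic, so a cycle obstructs aperiodicity. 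Conversely, a $\Z$-periodic unit $(x,0,x)$ yields some $(x,k,x)\in\mG_E$ with $k>0$ and hence a factorisation $x=\a w=\b w$ with $|\a|-|\b|=k>0$; comparing the two expressions forces $\b$ to be a prefix of $\a$ and extracts a positive-length closed path, i.e.\ a cycle. Hence aperiodicity of $\mG_E^{(0)}$ is exactly acyclicity of $E$; in particular, the conclusion of the previous paragraph forces $E$ to be acyclic.

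Finally, since $E$ is acyclic, the Abrams--Rangaswamy theorem~\cite{abramsranga} shows that $L_R(E)$ is von Neumann regular, which completes the proof. I expect the second paragraph to be the only delicate point: one must correctly match the $\Z$-grading $c(x,k,y)=k$ with the path-length difference $|\a|-|\b|$ appearing in the description of $\mG_E$, and check that a nontrivial graded isotropy element is literally equivalent to a closed path in $E$. Everything else is bookkeeping, since the substantive work has already been carried out in Proposition~\ref{hyhygt}.
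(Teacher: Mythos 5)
Your proposal is correct and follows the same route as the paper: transport the SF hypothesis through $L_R(E)\cong A_R(\mG_E)$, invoke Proposition~\ref{hyhygt} with the $\Z$-grading to get aperiodicity of $\mG_E^{(0)}$, deduce acyclicity of $E$, and conclude by the Abrams--Rangaswamy theorem. The only difference is that you spell out the equivalence between aperiodicity and acyclicity (correctly, via $\mu^\infty$ and the element $(x,|\mu|,x)$), a step the paper dismisses with ``this immediately implies.''
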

\begin{proof}
Since $L_R(E) \cong A_R(\mG_E)$, where $\mG_E$ is a $\Z$-graded groupoid, if $L_R(E)$ is a left SF-ring, then by Proposition~\ref{hyhygt}, $\mG_E^{(0)}$ has to be aperiodic. This immediately implies that $E$ has no cycle. Thus by \cite[Theorem 1]{abramsranga}, $L_R(E)$ is regular. 
\end{proof}

Using Corollary~\ref{chrisrea} one can immediately recover one direction of the Abram-Rangaswamy theorem of regularity of Leavitt path algebras~\cite{abramsranga}.  Namely suppose that for an arbitrary graph $E$ and a commutative ring $R$, $L_R(E)$ is regular. Since $L_R(E)=A_R(\mG_E)$, then by Corollary~\ref{chrisrea}, $R$ is regular and $E$ has no cycle, otherwise for the infinite path $u=cc\cdots$, where $c$ is a cycle, we have $\mG_u^u\cong \Z$, which is a contradiction.

\section{Acknowledgements}
The authors would like to acknowledge Australian Research Council grants DP160101481. They would like to thank Ardeline Mary Buhphang (North Eastern Hill University, Shillong) who brought to their attention the question of whether SF Leavitt path algebras are von Neumann regular. The first author would like to acknowledge the support by SERB Overseas Postdoctoral Fellowship [SB/OS/PDF-317/2016-17], Department of Science and Technology, Govt. of India during this work. She would also like to thank Western Sydney University for their hospitality.

\end{document}